\newtheorem{theorem}{Theorem}[section]
\newtheorem{lemma}[theorem]{Lemma}
\newtheorem{Obs}[theorem]{Observation}
\newtheorem{conjecture}[theorem]{Conjecture}
\newtheorem{question}[theorem]{Question}
\theoremstyle{definition}
\newcommand{\fly}[0]{\mathcal{F}}
\newcommand{\pot}[1]{\mathcal{P}(#1)}
\newcommand{\muf}[0]{\mu_{\fly}}
\newcommand{\muft}[0]{\mu_{\widetilde{\fly}}}
\newcommand{\saw}[0]{temperate }
\newcommand{\sawt}[1]{$#1$-temperate}
\title{Towards odd-sunflowers: \saw families and lightnings}
\author{Jan Petr\thanks{jp895@cam.ac.uk, Department of Pure Mathematics and Mathematical Statistics (DPMMS), University of Cambridge, Wilberforce Road, Cambridge, CB3 0WA, United Kingdom} \and Pavel Turek\thanks{pkah149@live.rhul.ac.uk, Department of Mathematics, Royal Holloway, University of London, Egham, Surrey TW20 0EX, United Kingdom}}
\date{}
\begin{document}

\maketitle

\begin{abstract}
    Motivated by odd-sunflowers, introduced recently by Frankl, Pach, and P{\'a}lv{\"o}lgyi, we initiate the study of \saw families: a family $\fly \subseteq \pot{[n]}$ is said to be \emph{\saw}if each $A \in \fly$ contains at most $|A|$ elements of $\fly$ as a proper subset.

    We show that the maximum size of a \saw family is attained by the middle two layers of the hypercube $\{0,1\}^n$. As a more general result, we obtain that the middle $t+1$ layers of the hypercube maximise the size of a family $\fly$ such that each $A \in \fly$ contains at most $\sum_{j=1}^t \binom{|A|}{j}$ elements of $\fly$ as a proper subset. Moreover, we classify all such families consisting of the maximum number of sets.
    
    In the case of intersecting \saw families, we find the maximum size and classify all intersecting \saw families consisting of the maximum number of sets for odd $n$. We also conjecture the maximum size for even $n$.
\end{abstract}

\section{Introduction}

Let $[n]=\{1,\ldots,n\}$. For a family $\fly \subseteq \pot{[n]}$ and a set $A \in \pot{[n]}$, let $\muf(A)=|\pot{A} \cap \fly|$.
Given a non-negative integer $t$, we say $\fly$ is \textit{\sawt{t}} if $\muf(A) \leq \sum_{j=0}^t \binom{|A|}{j}$ for all $A \in \fly$. Note that $\muf(A) \leq \sum_{j=0}^t \binom{|A|}{j}$ is trivially satisfied for all $A$ with $|A|\leq t$.
Of particular interest to us will be \sawt{1} families, i.e. families $\fly$ where $\muf(A) \leq 1+|A|$ for all $A \in \fly$.
For brevity, we will refer to \sawt{1} families as \textit{\saw}families. 

Simple examples of \sawt{t} families include antichains (for $t\geq0$), chains (for $t \geq 1)$ and the union of any $t+1$ consecutive layers of the hypercube (for $t \geq 0$) since any set $A$ of size $|A| \geq l$ has precisely $\sum_{j=0}^l \binom{|A|}{j}$ subsets which lie at most $l$ layers below $A$.

In this short paper, we begin the investigation of properties of \sawt{t} families by considering the following two extremal questions. Recall that a family is called \emph{intersecting} if the intersection of any two elements of it is non-empty.

\begin{question} \label{qu:nintersecting}
What is the maximum size of a \sawt{t} family on $[n]$?
\end{question}

\begin{question} \label{qu:intersecting}
What is the maximum size of an intersecting \sawt{t} family on $[n]$?
\end{question}

\subsection{Main results}

As we shall see, the answer to \Cref{qu:nintersecting} is given by the total size of the $t+1$ largest layers of the hypercube.

\begin{theorem}\label{th:nintersecting}
Let $t\leq n$ be non-negative integers. The maximum size of a \sawt{t} family on $[n]$ is $\sum_{j=0}^{t} \binom{n}{\lfloor(n-t)/2\rfloor + j}$. 
\end{theorem}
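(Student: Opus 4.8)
The lower bound is the easy direction, and was essentially done in the introduction: the union of the $t+1$ largest layers of $\{0,1\}^n$ --- the layers indexed by $\lfloor(n-t)/2\rfloor,\lfloor(n-t)/2\rfloor+1,\dots,\lfloor(n-t)/2\rfloor+t$ --- is a \sawt{t} family, and it has precisely $\sum_{j=0}^{t}\binom{n}{\lfloor(n-t)/2\rfloor+j}$ members. So the real content is the matching upper bound; writing $M(n,t):=\sum_{j=0}^{t}\binom{n}{\lfloor(n-t)/2\rfloor+j}$, the goal is to show $|\fly|\le M(n,t)$ for every \sawt{t} family $\fly\subseteq\pot{[n]}$.

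My plan is induction on $n$. The case $t=0$ is Sperner's theorem, which I would simply invoke; and if $n\le t$ then $M(n,t)=2^n$ and there is nothing to prove, so assume $1\le t<n$. Split $\fly$ on the last ground-set element, writing $\fly_0=\{A\in\fly:n\notin A\}$ and $\fly_1=\{A\setminus\{n\}:A\in\fly,\ n\in A\}$; these are families on $[n-1]$ with $|\fly|=|\fly_0|+|\fly_1|$. A one-line check shows that $\fly_0$ is again \sawt{t} --- any subset of a member of $\fly_0$ avoids $n$, so $\mu_{\fly_0}$ and $\muf$ agree on $\fly_0$ --- and hence $|\fly_0|\le M(n-1,t)$ by induction. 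The two halves are tied together through the identity
\[
 \muf(B\cup\{n\})=\mu_{\fly_1}(B)+\mu_{\fly_0}(B)\qquad(B\subseteq[n-1]),
\]
which for $B\in\fly_1$ reads $\mu_{\fly_1}(B)+\mu_{\fly_0}(B)\le\sum_{j=0}^{t}\binom{|B|+1}{j}$, and the bookkeeping should be driven by the arithmetic identity
\[
 M(n,t)=M(n-1,t-1)+M(n-1,t+1)\qquad(1\le t\le n-1),
\]
which follows from Pascal's rule together with the fact that the $t$, resp.\ $t+2$, central layers of $\{0,1\}^{n-1}$ sit symmetrically about the middle.

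The crux --- where I expect essentially all the difficulty to lie --- is that $\fly_1$ is \emph{not} in general \sawt{(t+1)} (nor \sawt{(t-1)}): $\sum_{j=0}^{t}\binom{|B|+1}{j}$ can strictly exceed $\sum_{j=0}^{t+1}\binom{|B|}{j}$ when $|B|$ is small, so one cannot simply say ``$\fly_0$ costs at most $M(n-1,t-1)$ and $\fly_1$ at most $M(n-1,t+1)$''. A natural fix to pursue is to prove a stronger statement by the same kind of induction: for every pair of families $(\mathcal G,\mathcal H)$ on $[n]$ with $\mathcal G$ being \sawt{t} and $\mu_{\mathcal H}(B)+\mu_{\mathcal G}(B)\le\sum_{j=0}^{t}\binom{|B|+1}{j}$ for all $B\in\mathcal H$, one has $|\mathcal G|+|\mathcal H|\le M(n+1,t)$; then $(\fly_0,\fly_1)$ is such a pair on $[n-1]$ and the bound follows. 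Running the induction on this pair-statement, splitting again on a coordinate, one wants the ``overflow'' of $\mu_{\mathcal H}$ beyond $\sum_{j=0}^{t+1}\binom{|B|}{j}$ to be compensated exactly by the forced deficit of $\mu_{\mathcal G}$ below $B$. Pinning down the correct strengthened statement and closing the base cases is the part that will take genuine work.

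One cautionary remark, since it indicates why a softer approach fails. For any symmetric chain decomposition $\{C\}$ of $\{0,1\}^n$ one has $\sum_{C}\min(|C|,t+1)=M(n,t)$, so it would be enough to prove $|\fly\cap C|\le\min(|C|,t+1)$ for every chain $C$; but this is false, since a single chain --- e.g.\ $\{\varnothing,\{1\},\{1,2\}\}$ when $t=1$ --- is itself a \saw family with more than $t+1$ elements. (The same example already shows that, for small $n$, an extremal \saw family need not be a union of consecutive layers, which is why the accompanying classification of the extremal families requires some care.)
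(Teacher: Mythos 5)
Your lower bound is fine, and your preliminary observations are all correct: $\fly_0$ is again \sawt{t}, the identity $\muf(B\cup\{n\})=\mu_{\fly_0}(B)+\mu_{\fly_1}(B)$ holds for $B\in\fly_1$, the recursion $M(n,t)=M(n-1,t-1)+M(n-1,t+1)$ is valid, and your cautionary remark (that the per-chain bound $|\fly\cap C|\le t+1$ is false, e.g.\ for the chain $\{\emptyset,\{1\},\{1,2\}\}$ with $t=1$) is a genuinely useful observation. But what you have written is a plan, not a proof. The entire content of the theorem is the upper bound, and you explicitly defer its crux: you never pin down a pair-hypothesis on $(\mathcal G,\mathcal H)$ strong enough to be closed under a further coordinate split, you give no mechanism for why the ``overflow'' of $\mu_{\mathcal H}(B)$ above $\sum_{j=0}^{t+1}\binom{|B|}{j}$ should be exactly compensated by a deficit of $\mu_{\mathcal G}$ (this is asserted as a hope), and you do not treat the base cases. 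As it stands, the argument establishes nothing beyond $|\fly_0|\le M(n-1,t)$, which, as you note yourself, does not combine with any bound on $|\fly_1|$ to yield $M(n,t)$. So there is a genuine gap, and it sits exactly where you say it does.

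For comparison, the paper avoids induction entirely and uses a weighted LYM-type averaging argument. Give each set $A$ the weight $w(A)=\binom{n}{|A|}$; for a uniformly random maximal chain $\mathcal C$ one has $\mathbb{E}_{\mathcal C}[w(\fly\cap\mathcal C)]=|\fly|$. Conditioning on the maximal element $A$ of $\fly\cap\mathcal C$, the conditional expectation is at most $\sum_{B\in\fly\cap\pot{A}}\binom{n}{|B|}\big/\binom{|A|}{|B|}$; since $\binom{n}{|B|}\big/\binom{|A|}{|B|}=\binom{n}{|A|}\big/\binom{n-|B|}{n-|A|}$ is non-decreasing in $|B|$ and $A$ has at most $\sum_{j=0}^{t}\binom{|A|}{j}$ subsets in $\fly$, this is at most $\sum_{j=\max(0,|A|-t)}^{|A|}\binom{n}{j}\le M(n,t)$. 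This sidesteps precisely the obstruction your remark identifies: one bounds not the number of sets on each chain but a weighted count in expectation, and the conditioning on the top element converts the local condition $\muf(A)\le\sum_{j=0}^{t}\binom{|A|}{j}$ directly into the global bound. If you wish to salvage your inductive route, you must actually formulate and prove the pair statement; the averaging argument is considerably shorter and also yields the classification of extremal families by tracking the equality cases.
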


Moreover, we classify all the \sawt{t} families which attain the maximal size. We use $\binom{[n]}{k}$ to denote the set of subsets of $[n]$ of size $k$.

\begin{theorem}\label{th:nintersecting-class}
Let $t \leq n$ be non-negative integers.

\begin{enumerate}[\normalfont(a)]
\item\label{item:first}
If $t \equiv n \pmod{2}$, the only \sawt{t} family of maximal size is $\bigcup_{j=0}^{t} \binom{[n]}{(n-t)/2 + j}$.

\item\label{item:second}
If $t=n-1$, any family of size $2^n-1$ is a \sawt{t} family of maximal size.

\item\label{item:third}
If $t \not\equiv n \pmod 2$ and $t < n-1$, then there are two \sawt{t} families of maximal size: $\bigcup_{j=0}^{t} \binom{[n]}{(n-t-1)/2 + j}$ and $\bigcup_{j=0}^{t} \binom{[n]}{(n-t+1)/2 + j}$.
\end{enumerate}
\end{theorem}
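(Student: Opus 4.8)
The plan is to separate out the degenerate case \ref{item:second}, dispose of the trivial subcase $t=n$ of \ref{item:first}, and then reduce \ref{item:first} and \ref{item:third} to one structural claim plus a short computation with binomial coefficients. For \ref{item:second}: when $t=n-1$, \Cref{th:nintersecting} gives maximal size $\sum_{j=0}^{n-1}\binom{n}{j}=2^n-1$, so it suffices to check that \emph{every} $\fly\subseteq\pot{[n]}$ with $|\fly|=2^n-1$ is \sawt{(n-1)}; indeed, for $|A|\le n-1$ the required inequality is $\muf(A)\le 2^{|A|}$, which holds since $\muf(A)\le|\pot{A}|$, and for $A=[n]$ (if $[n]\in\fly$) it reads $\muf([n])=|\fly|=2^n-1=\sum_{j=0}^{n-1}\binom{n}{j}$. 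For $t=n$ in \ref{item:first}: the maximal size is $2^n$, forcing $\fly=\pot{[n]}=\bigcup_{j=0}^{n}\binom{[n]}{j}$, the stated family.

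So from now on assume $0\le t\le n-2$, and let $\fly$ be \sawt{t} with $|\fly|=M:=\sum_{j=0}^{t}\binom{n}{\lfloor(n-t)/2\rfloor+j}$. The structural claim I will establish is that such an $\fly$ must be a union of $t+1$ consecutive layers, $\fly=\bigcup_{j=0}^{t}\binom{[n]}{k+j}$ for some $0\le k\le n-t$. Granting it, set $f(k)=\sum_{j=0}^{t}\binom{n}{k+j}$, so that $f(k+1)-f(k)=\binom{n}{k+t+1}-\binom{n}{k}$; by unimodality and symmetry of $\binom{n}{\cdot}$ this is positive for $2k+t+1<n$, zero for $2k+t+1=n$, and negative for $2k+t+1>n$. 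Hence if $n\equiv t\pmod 2$ then $f$ has a unique maximiser, $k=(n-t)/2$ (the equality $2k+t+1=n$ being impossible), which gives \ref{item:first}; and if $n\not\equiv t\pmod 2$ then $f$ is maximised exactly at $k=(n-t-1)/2$ and $k=(n-t+1)/2$, with equal values since $\binom{n}{(n-t-1)/2}=\binom{n}{(n+t+1)/2}$, which gives the two families of \ref{item:third}. The hypothesis $t\le n-2$ is exactly what guarantees that both of these windows satisfy $0\le k\le n-t$.

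It remains to prove the structural claim, and this is where the real work lies. I would obtain it by revisiting the proof of \Cref{th:nintersecting} and extracting its equality cases — a sharp enough proof of the bound should already force an extremal family into $t+1$ consecutive, complete layers. The mechanism I expect to exploit is a compression/layer‑pushing argument together with the normalized matching property of $\pot{[n]}$: if the lowest layer of $\fly$ sits at a level $b<n/2$, then since $\binom{n}{b+1}\ge\binom{n}{b}$ there is an injection $g$ of $\fly\cap\binom{[n]}{b}$ into $\binom{[n]}{b+1}$ with $B\subseteq g(B)$; replacing that layer by its $g$-image neither increases $|\fly|$ nor destroys \sawt{t}-ness, because the only sets whose subset-count can change lie strictly above level $b+1$, and there the count can only decrease (if $g(B)\subseteq A$ then $B\subseteq A$, and $g$ is injective). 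Running this (and the dual move from the top) and analysing when it is lossless on a maximal family should confine the support of $\fly$ to the central band of levels $\lfloor(n-t)/2\rfloor,\dots,\lceil(n+t)/2\rceil$ with each intermediate level complete; since that band consists of $t+1$ levels when $n\equiv t\pmod2$ and $t+2$ levels otherwise, comparing the resulting size with $M$ via $f$ forces exactly $t+1$ consecutive full layers.

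The main obstacle is precisely this equality analysis: showing that on a family of the maximal size $M$ the compression moves incur no loss (no collisions when a layer is pushed onto the next), and ruling out surviving partial layers. A good sanity check on where the hypothesis $t\le n-2$ must be used is case \ref{item:second}: for $t=n-1$ the \sawt{t} condition is essentially vacuous and the structural claim genuinely fails, so any correct argument must invoke $t\le n-2$ exactly at the step that eliminates incomplete layers.
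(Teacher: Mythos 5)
Your treatment of the peripheral cases is correct and matches the paper: for $t=n-1$ the condition $\muf(A)\le\sum_{j=0}^{n-1}\binom{|A|}{j}$ is vacuous for $|A|\le n-1$ and exactly tight for $A=[n]$ in a family of size $2^n-1$; the case $t=n$ is trivial; and your computation with $f(k)=\sum_{j=0}^t\binom{n}{k+j}$ correctly identifies which windows of $t+1$ consecutive layers are maximal in each parity. But all of this is the easy part. The entire content of the theorem for $t\le n-2$ is the structural claim that an extremal family \emph{is} a union of $t+1$ consecutive layers, and you do not prove it: you propose a normalized-matching compression and then explicitly defer ``the main obstacle,'' namely the equality analysis. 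That deferral is not a presentational choice — the compression argument as sketched does not close. If the push of $\fly_b$ into level $b+1$ collides with existing members of $\fly_{b+1}$, the resulting family is smaller than $M$, which is perfectly consistent with \Cref{th:nintersecting} and yields no contradiction; so extremality of $\fly$ gives you no leverage to conclude the compression is lossless, and a lossless compression in turn tells you about the compressed family, not the original one. You would still need to rule out partial layers and, in the odd-parity case, ``mixed'' configurations spread over the $t+2$ central levels, and none of that is supplied.

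The paper's route avoids these difficulties entirely by reusing the chain argument from the proof of \Cref{th:nintersecting}: extremality forces equality in each of \eqref{eq:max}, \eqref{eq:layers} and \eqref{eq:sizeofA}, which pins the sizes of all \emph{chain-maximal} elements $A$ to $(n+t)/2$ (resp.\ $\{(n+t-1)/2,(n+t+1)/2\}$) and forces $\pot{A}\cap\fly$ to be exactly the full $t+1$ layers of $\pot{A}$ below $A$. In the odd-parity case the full top layer is then recovered by the observation that a codimension-one subset $B$ of a top-level set $A\in\fly$ cannot itself be chain-maximal (its down-set in $\fly$ has the wrong shape), so every maximal chain through $B$ meets $\fly$ above $B$, forcing all supersets of $B$ at the top level into $\fly$; iterating fills the layer. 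If you want to salvage your write-up, the cleanest fix is to abandon the compression and carry out this equality analysis; as it stands, the proof has a genuine gap at its central step.
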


The case of \Cref{qu:intersecting} turns out to be more convoluted. In this paper, we restrict our attention to the case of intersecting \saw families. In such a setting, we obtain the following answer for odd $n$.

\begin{theorem} \label{th:intersecting-odd}
The maximum size of an intersecting \saw family on $[n]$ for odd $n=2k-1$ is $\binom{2k-1}{k} + \binom{2k-1}{k+1}$.
\end{theorem}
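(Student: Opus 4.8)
\emph{Lower bound.} Take $\mathcal{M} := \binom{[n]}{k} \cup \binom{[n]}{k+1}$ (with $n=2k-1$). Any two members of $\mathcal{M}$ have size larger than $n/2$, hence meet, so $\mathcal{M}$ is intersecting; being a union of two consecutive layers it is \saw by the remark in the introduction; and $|\mathcal{M}| = \binom{2k-1}{k} + \binom{2k-1}{k+1}$. So this value is attained by an intersecting \saw family.

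\emph{Upper bound, the plan.} Let $\fly$ be an intersecting \saw family on $[n] = [2k-1]$. The guiding remark is numerical: \Cref{th:nintersecting} with $t = 3$ bounds a \sawt{3} family on $[2k-1]$ by $\binom{2k-1}{k-2} + \binom{2k-1}{k-1} + \binom{2k-1}{k} + \binom{2k-1}{k+1} = 2\bigl(\binom{2k-1}{k} + \binom{2k-1}{k+1}\bigr)$ -- exactly twice the claimed bound -- and by \Cref{th:nintersecting-class}(a) the unique extremal \sawt{3} family is the union of the middle four layers, i.e.\ $\mathcal{M} \cup \{[n]\setminus A : A \in \mathcal{M}\}$. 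This suggests symmetrising $\fly$ by complements: set $\widetilde{\fly} := \fly \cup \{[n]\setminus A : A \in \fly\}$. Since $\fly$ is intersecting, no set lies in $\fly$ together with its complement, so the two parts are disjoint and $|\widetilde{\fly}| = 2|\fly|$. The plan is to prove that $\widetilde{\fly}$ is \sawt{3}; then \Cref{th:nintersecting} gives $2|\fly| = |\widetilde{\fly}| \le 2\bigl(\binom{2k-1}{k} + \binom{2k-1}{k+1}\bigr)$, which is the claim.

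\emph{The main obstacle.} Everything reduces to the inequality $\muft(C) \le \sum_{j=0}^{3}\binom{|C|}{j}$ for all $C \in \widetilde{\fly}$. Using the disjointness above, $\muft(C) = \muf(C) + \bigl|\{A \in \fly : A \supseteq [n]\setminus C\}\bigr|$, and when $C \in \fly$ the first summand is at most $1 + |C|$ because $\fly$ is \saw. So the crux is to bound the number of members of $\fly$ \emph{containing} the small set $[n]\setminus C$ by $\binom{|C|}{2} + \binom{|C|}{3}$. Intuitively $\fly$ cannot simultaneously have many members below $C$ and many members above $[n]\setminus C$, since a member of $\fly$ of size close to $n$ caps, via the \saw condition at it, the whole collection of $\fly$-sets it contains; but turning this tension into the precise estimate is where I expect the real work to lie, and it will likely require first isolating \emph{degenerate} families, e.g.\ those containing a tiny set. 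If $\{x\} \in \fly$ then $\fly$ is a star with centre $x$; the link $\{A\setminus\{x\} : A \in \fly\}$ with $\emptyset$ removed is a \saw family on $[2k-2]$, so \Cref{th:nintersecting} gives $|\fly| \le 1 + \binom{2k-2}{k-1} + \binom{2k-2}{k} = 1 + \binom{2k-1}{k} \le \binom{2k-1}{k} + \binom{2k-1}{k+1}$, and analogous link/complement reductions should clear the remaining degenerate cases, leaving the generic case to the doubling argument.

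\emph{An alternative.} One can instead try to bypass \Cref{th:nintersecting} by constructing an explicit injection from $\fly \setminus S$ into $S \setminus \fly$, where $S := \binom{[n]}{k} \cup \binom{[n]}{k+1}$: for $A \in \fly$ with $|A| \ge k+2$, the \saw condition leaves at least $\binom{|A|}{k} + \binom{|A|}{k+1} - |A|$ of the $k$- and $(k+1)$-subsets of $A$ outside $\fly$, while for $A \in \fly$ with $|A| \le k-1$ the intersecting condition puts \emph{every} $k$- and $(k+1)$-subset of $[n]\setminus A$ outside $\fly$. Hall's theorem reduces the existence of such an injection to a neighbourhood lower bound in the associated bipartite graph; proving that bound -- controlling simultaneously the overlaps among large members and among complements of small members, with a separate treatment of members of size exactly $k-1$, whose only possible image is their own complement -- is once more the main obstacle.
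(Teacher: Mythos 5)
Your lower bound is fine, and the numerology behind the symmetrisation plan is appealing, but the central claim of your main route --- that $\widetilde{\fly} = \fly \cup \{[n]\setminus A : A \in \fly\}$ is \sawt{3} --- is not just unproven, it is false in general, and not only for the ``degenerate'' families you mention. The problem is the case $C = [n]\setminus A$ with $A\in\fly$: there $\muft(C)$ counts, among other things, all supersets of $A$ lying in $\fly$, and the \saw condition (which constrains subsets, not supersets) does nothing to keep these within three layers of $A$. Concretely, take $k\geq 13$, $n=2k-1$, fix $A\in\binom{[n]}{k}$, and let $\fly = \{A\}\cup\{D : A\subseteq D,\ |D|=k+4\}$. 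Every member contains $A$, so $\fly$ is intersecting; each $D$ satisfies $\muf(D)=2$ and $\muf(A)=1$, so $\fly$ is \saw. Yet $C=[n]\setminus A\in\widetilde{\fly}$ has $\muft(C)\geq 1+\binom{k-1}{4} > \sum_{j=0}^{3}\binom{k-1}{j}$ once $k-1\geq 12$. So the reduction to \Cref{th:nintersecting} with $t=3$ breaks down, and the ``real work'' you defer is not a residual estimate but the entire theorem; the alternative Hall-type injection is likewise left at the level of a plan, with its key neighbourhood bound unproven. (Your treatment of the singleton case via the link is correct, but it is a minor sub-case.)

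For comparison, the paper's argument avoids complement-symmetrisation altogether. It first proves a chain lemma: for a \saw family $\fly$ not containing $\emptyset$, a uniformly random maximal chain $\mathcal{C}$ satisfies $\mathbb{E}_{\mathcal{C}}[|\mathcal{C}\cap\fly|]\leq 2$, by conditioning on the maximal element $A$ of $\mathcal{C}\cap\fly$ and using $\mathbb{P}(B\in\mathcal{C}\mid M(\mathcal{C})=A)=1/\binom{|A|}{|B|}\leq 1/|A|$ together with $\muf(A)\leq |A|+1$. This gives the LYM-type inequality $\sum_{i}|\fly_i|/\binom{n}{i}\leq 2$, and combining it with the intersecting constraint $|\fly_{k-1}|+|\fly_k|\leq\binom{2k-1}{k}$ and the fact that $\binom{2k-1}{k-1},\binom{2k-1}{k},\binom{2k-1}{k+1}$ are the largest binomial coefficients yields the bound directly. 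If you want to salvage a complement-based argument, you would first need to rule out families like the one above --- but at that point the fractional (LYM) counting is both shorter and stronger.
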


As with the non-intersecting case, we provide the full classification of the extremal families.

\begin{theorem}\label{th:intersecting-class}
For odd $n=2k-1$ the unique intersecting \saw family of maximal size is $\binom{[2k-1]}{k}\cup\binom{[2k-1]}{k+1}$ unless $n=3$, in which case there are three additional such families. These are $\{ A \in \pot{[3]} : i\in A\}$ for each $i\in [3]$. 
\end{theorem}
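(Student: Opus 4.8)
The plan is to start from a maximum intersecting temperate family $\fly\subseteq\pot{[n]}$, which by \Cref{th:intersecting-odd} has size $M:=\binom{2k-1}{k}+\binom{2k-1}{k+1}$, and to pin down all possibilities for $\fly$. The small cases I would settle by hand. For $n=1$ we have $M=1$ and the only intersecting family of that size is $\{[1]\}=\binom{[1]}{1}$, as claimed. For $n=3$ we have $M=4=2^{n-1}$, which is the maximum size of any intersecting family on $[3]$; hence the intersecting families of size $M$ are precisely the maximal intersecting families, of which there are exactly four — the three stars $\{A\in\pot{[3]}:i\in A\}$ and $\binom{[3]}{2}\cup\binom{[3]}{3}$ — and a one-line check confirms all four are temperate. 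So the substance is the range $n=2k-1\ge 5$.

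For $n\ge 5$ the crux is a confinement statement: every member of $\fly$ has size $k$ or $k+1$. Granting it, $\fly\subseteq\binom{[n]}{k}\cup\binom{[n]}{k+1}$, the right-hand side has exactly $M$ elements, and $|\fly|=M$ forces equality. (Inside these two layers nothing further needs checking: any two of the sets intersect, and a $(k+1)$-set has at most $k+1$ proper subsets available, so the temperate inequality holds automatically.) To establish confinement I would either sharpen the proof of \Cref{th:intersecting-odd} into a defect form $|\fly|\le M-\Delta(\fly)$, with $\Delta(\fly)$ nonnegative and strictly positive once $\fly$ contains a set of size outside $\{k,k+1\}$ so that maximality forces $\Delta(\fly)=0$, or argue directly as follows. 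Large sets are the easier half: if $[n]\in\fly$ then $\muf([n])=|\fly|\le n+1<M$; more generally a set of size $\ge k+2$ can be compressed down towards the two middle layers without breaking intersecting-ness or temperateness and without decreasing $|\fly|$, and a Kruskal--Katona/shadow estimate played against $\muf(A)\le|A|+1$ makes the inequality strict unless no such set was present. Small sets are the delicate half: a single $(k-1)$-set $B$ already forbids the $k$-set $[n]\setminus B$, and the temperate condition at the $(k+1)$-supersets of $B$ in $\fly$ forbids further $k$-sets ($\muf$ of such a superset would exceed $k+2$ if all of its $k$-subsets sat below it alongside $B$); one then argues that the resulting net loss is incompatible with $|\fly|=M$ when $k\ge3$. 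When $k=2$, by contrast, these obstructions collapse onto each other — the forbidden $k$-set $[n]\setminus B$ is itself a $k$-subset of the unique $(k+1)$-superset of $B$ — so no loss is incurred, which is precisely why $n=3$ admits the extra stars. Sets of size $\le k-2$ are excluded by the same mechanism with room to spare.

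The main obstacle I anticipate is exactly this confinement step, and within it the exclusion of $(k-1)$-sets for $k\ge3$: unlike in the large-set case there is no one-step shadow contradiction, so one must balance the loss forced by intersecting-ness against the temperate overload on supersets and close the count globally — presumably the point at which the paper's ``lightning'' configurations enter, bookkeeping the alternating chains of forced containments and exclusions. Everything else is routine: the passage from confinement to the conclusion is just the size equality, and the cases $n\le 3$ are a finite verification, the only care needed being the observation that at $n=3$ ``size $M$'' coincides with ``maximal intersecting'', which is what makes the enumeration of the four extremal families complete.
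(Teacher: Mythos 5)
Your reduction is sound as far as it goes: the cases $n\le 3$ are handled correctly, and for $n\ge 5$ it is true that once every member of $\fly$ is known to have size $k$ or $k+1$, the count $|\fly|=\binom{2k-1}{k}+\binom{2k-1}{k+1}$ forces $\fly=\binom{[2k-1]}{k}\cup\binom{[2k-1]}{k+1}$. But the confinement step, which you yourself identify as the crux, is not proved. For large sets you invoke an unspecified compression plus Kruskal--Katona argument, and compressions are not obviously compatible with the \saw condition (pushing a set down can increase $\muf$ at other members); for $(k-1)$-sets you state the goal (``the resulting net loss is incompatible with $|\fly|=M$'') without supplying the global bookkeeping you correctly observe is required. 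As written this is a correct reduction plus a program, not a proof. (Also, the ``lightning'' plays no role here; it is the conjectured extremal construction for even $n$.)

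The paper closes exactly this gap by a different route and never excludes the lower layers directly. The equality case of the bound $\sum_{i} |\fly_i|/\binom{n}{i}\le 2$ from \Cref{le:chain}, combined with $|\fly_{k-1}|+|\fly_k|\le\binom{2k-1}{k}$, pins the family into the four layers $k-2,\dots,k+1$ with $|\fly_{k-1}|+|\fly_k|=\binom{2k-1}{k}$ and $|\fly_{k-2}|+|\fly_{k+1}|=\binom{2k-1}{k+1}$. Intersecting-ness forces some $(k+1)$-set $A\in\fly$; the `moreover' clause of \Cref{le:chain} (equality forces $\muf(A)=|A|+1$ with all proper subsets of sizes $1$ or $|A|-1$) then puts every $k$-subset of $A$ into $\fly$ when $k>3$, and a propagation argument --- such a $k$-set cannot be chain-maximal, hence all of its $(k+1)$-supersets lie in $\fly$, and so on --- fills out both middle layers; the layers $k-2$ and $k-1$ are then empty simply because the count has already reached $M$. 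The case $n=5$ needs a separate argument to exclude a singleton (quotient by the common element and compare against \Cref{th:nintersecting}). If you want to salvage your route, the equality analysis of \Cref{le:chain} is the missing tool: it supplies both the layer restriction and the local structure around chain-maximal sets that your ``balance the losses'' step would need.
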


For even $n$, we make the following conjecture.

\begin{conjecture} \label{conj:intersecting-even}
The maximum size of an intersecting \saw family on $[n]$ for even $n=2k$ is $\frac{1}{2}\binom{2k}{k}+\binom{2k}{k+1}+\binom{2k-1}{k+2}$.
\end{conjecture}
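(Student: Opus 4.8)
\emph{Towards a proof.} We sketch a possible route; the lower bound is routine and the upper bound is the crux.

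\textbf{Lower bound.} The conjectured value is attained on $[2k]$ by
\[
\fly_0 \;=\; \left\{S \in \binom{[2k]}{k} : 1 \in S\right\} \;\cup\; \binom{[2k]}{k+1} \;\cup\; \left\{S \in \binom{[2k]}{k+2} : 1 \notin S\right\},
\]
whose three parts have sizes $\binom{2k-1}{k-1}=\tfrac12\binom{2k}{k}$, $\binom{2k}{k+1}$ and $\binom{2k-1}{k+2}$. This $\fly_0$ is intersecting: two members of size more than $k$ meet trivially inside $[2k]$, a member of size $k$ meets any member of size at least $k+1$ because the sizes sum to more than $2k$, and two members of the $k$-layer meet since both contain $1$. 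It is also a \saw family: a $k$-set of $\fly_0$ has no proper subset in $\fly_0$; a $(k+1)$-set $B\in\fly_0$ satisfies $\muf(B)=1+\bigl|\{S\subset B : |S|=k,\,1\in S\}\bigr|\le 1+k<1+|B|$; and a $(k+2)$-set $C\in\fly_0$ has, apart from itself, only its $k+2$ subsets of size $k+1$ among its subsets in $\fly_0$, so $\muf(C)=1+(k+2)=1+|C|$ --- this is exactly where the restriction $1\notin C$ is needed, since a $(k+2)$-set containing $1$ would in addition have $\binom{k+1}{2}$ subsets in the $k$-layer. Hence $|\fly_0|=\tfrac12\binom{2k}{k}+\binom{2k}{k+1}+\binom{2k-1}{k+2}$.

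\textbf{The upper bound.} Let $\fly$ be an intersecting \saw family on $[2k]$ and put $f_i=|\fly\cap\binom{[2k]}{i}|$; the task is to bound $\sum_i f_i$. The plan is to adapt the proof of \Cref{th:intersecting-odd}: reduce, by a symmetrization (compression) together with a maximality argument, to a family whose profile $(f_0,\dots,f_{2k})$ obeys a short list of explicit inequalities, and then maximise the linear functional $\sum_i f_i$ over them. The essential new feature compared with odd $n$ is that the extremum is \emph{not} a union of full layers --- already for $n=4$ both $\{\{1,2\},\{1,3\},\{1,4\}\}\cup\binom{[4]}{3}$ and $\{\{1,2\},\{1,3\},\{2,3\}\}\cup\binom{[4]}{3}$ are non-isomorphic extremal families --- so the profile inequalities must be precise enough to resolve the interplay of the layers $k$, $k+1$, $k+2$. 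Several kinds of inequality enter. The Erd\H{o}s--Ko--Rado theorem in the self-complementary regime bounds the middle layer, $f_k\le\tfrac12\binom{2k}{k}$. The decisive one is the \saw condition at the $(k+2)$-layer: for $H\in\fly$ with $|H|=k+2$,
\[
1+\bigl|\{G\in\fly:G\subset H,\,|G|=k+1\}\bigr|+\bigl|\{G\in\fly:G\subset H,\,|G|=k\}\bigr|\;\le\;\muf(H)\;\le\;1+(k+2),
\]
so a $(k+2)$-set of $\fly$ all of whose $k+2$ subsets of size $k+1$ lie in $\fly$ contains no $k$-subset of $\fly$; double-counting the pairs $G\subset H$ with $G,H\in\fly$, $|G|=k$, $|H|=k+2$, and invoking the Kruskal--Katona theorem (or a convexity estimate), converts this into a quantitative trade-off among $f_k$, $f_{k+1}$ and $f_{k+2}$ that is tight precisely at $\fly_0$. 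Finally, the \saw condition far from the middle forces the extreme layers to be small: a set of size $m\ge k+3$ in $\fly$ has $\binom{m}{k+1}+\binom{m}{k+2}>m$ subsets of size $k+1$ or $k+2$, which rules out such sets once layers $k+1$ and $k+2$ are densely populated; and sets of size below $k$ are controlled similarly, being charged against the \saw conditions of their supersets in the layers $k,k+1,k+2$ (together with a maximality argument that trades such a set for a superset whenever possible, using that enlarging a set never raises $\muf$ of a smaller one). Feeding the resulting inequalities into a linear program should yield maximum $\tfrac12\binom{2k}{k}+\binom{2k}{k+1}+\binom{2k-1}{k+2}$.

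\textbf{Where the difficulty lies.} The delicate step is the reduction to a structured family: one wants a normalisation --- ideally a compression operator --- that simultaneously preserves $|\fly|$, the intersecting property and the \saw property. The intersecting property is preserved by the usual $(i,j)$-shifts, but it is not clear that they preserve the \saw property, since a shift can create new containments below a given set; establishing that some compression does preserve it, or replacing it by a more \emph{ad hoc} reduction that pushes $\fly$ into the window $\{k,k+1,k+2\}$, is the main obstacle. A secondary difficulty, foreshadowed by the exceptional families for $n=3$ in \Cref{th:intersecting-class} and by the two extremal families for $n=4$ above, is that the final linear program is optimised at more than one vertex, so the Kruskal--Katona trade-off must be proved without losing the lower-order term that distinguishes the true coefficient $\binom{2k-1}{k+2}$ of the top layer from the full-layer value $\binom{2k}{k+2}$.
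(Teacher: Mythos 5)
The statement you are addressing is stated in the paper only as a conjecture (\Cref{conj:intersecting-even}); the paper offers no proof, so there is no argument of theirs to compare your upper bound against. Your lower bound is correct and coincides with the paper's: your $\fly_0$ is exactly the lightning defined there, and your verification that it is intersecting and \saw is accurate — in particular, the observation that excluding $1$ from the $(k+2)$-sets is precisely what keeps $\muf(C)=|C|+1$ is the right point to stress.

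The upper bound, however, is a programme rather than a proof, and the gaps you yourself flag are exactly where the problem is open. Concretely: (i) you have no compression that preserves the \saw property, and without a reduction to the window $\{k,k+1,k+2\}$ the later inequalities do not apply to a general family; the remark that sets of size below $k$ are ``controlled similarly'' is not an argument, since such a set is charged against supersets that need not lie in $\fly$ at all. (ii) The claim that sizes $m\ge k+3$ are ruled out ``once layers $k+1$ and $k+2$ are densely populated'' is circular as stated, because the density of those layers is part of what must be proved. (iii) The decisive ``Kruskal--Katona trade-off'' among $f_k$, $f_{k+1}$, $f_{k+2}$ is never written down, and it is exactly this step that must improve the coefficient of the top layer from $\frac{1}{2}\binom{2k}{k+2}$ — which the paper notes is what the chain method of \Cref{le:chain} yields, and which is known not to be attained — down to $\binom{2k-1}{k+2}$. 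Until (iii) is carried out, the linear program has not been shown to have the conjectured optimum; your own $n=4$ examples show the optimal face is not a single vertex, which makes the required inequalities delicate. In sum, the proposal establishes only the (known) lower bound; the conjecture remains unproved.
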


We believe one of the extremal constructions supporting \Cref{conj:intersecting-even} is the following one, whose name comes from the shape the contained elements resemble when drawn in the Hesse diagram of the hypercube, with rows ordered lexicographically. By a \textit{lightning}, we mean the family on $[2k]$ which contains:
\begin{itemize}
\item all elements of $\binom{[2k]}{k}$ containing $1$, and
\item all elements of $\binom{[2k]}{k+1}$, and
\item all elements of $\binom{[2k]}{k+2}$ not containing $1$.
\end{itemize}

\subsection{Background and motivation}

The motivation for investigating \saw families and the two questions above in particular comes from odd-sunflowers as introduced by Frankl, Pach and P{\'a}lv{\"o}lgyi \cite{frankl2023odd}. These are in turn inspired by sunflowers, as originally studied by Erd\H{o}s and Rado \cite{ErdosRadoSunflowers}.

Let us give a brief historical overview on sunflowers and odd-sunflowers. A family of $r \geq 3$ sets $S_1,\ldots,S_r$ is an \textit{$r$-sunflower} if $S_i \cap S_j = \bigcap_{k=1}^{r} S_k$ for all $i \neq j$. A \emph{sunflower} is a family which is an $r$-sunflower for some $r \geq 3$. A family is said to be \textit{$r$-sunflower-free} if it contains no $r$-sunflower.
Analogously, it is said to be \textit{sunflower-free} if it contains no sunflower.

In 1960, Erd\H{o}s and Rado \cite{ErdosRadoSunflowers} showed that an $r$-sunflower-free family $\fly$ where every set $A \in \fly$ satisfies $|A| \leq w$ satisfies $|\fly| < w!\cdot (r-1)^w$. Furthermore, they conjectured that there exists $c=c(r)$ such that this bound may be improved to $|\fly| < c^w$. This conjecture, known as the Sunflower Conjecture, is unresolved to this date. In view of recent developments due to Alweiss, Lovett, Wu and Zhang \cite{alweiss2021improved}, Rao \cite{Rao2020Coding}, Tao \cite{tao2020sunflower} and Bell, Chueluecha and Warnke \cite{Bell2021}, the current best known bound is of the form $O(r\cdot \log(w))^w$.

A second main direction in which sunflower-free families have been studied, and the direction closer to our questions, is that of the maximum size of a sunflower-free family $\fly \subseteq \pot{[n]}$. Denote this quantity $f(n)$, and let $\phi=\lim_{n \to \infty} f(n)^{1/n}$. Erd\H{o}s and Szemerédi \cite{ERDOS1978308} conjectured in 1978 that $\phi<2$. Naslund and Sawin \cite{naslund2017upper} finally proved this almost 40 years later, building on the techniques of Croot, Lev and Pach \cite{crootlevpach}, Ellenberg and Gijswijt \cite{ellenberggijswijt} and Tao \cite{tao2016CLP}. Naslund and Sawin gave $\phi <1.890$, which remains the state of art. In the opposite direction, the best known lower bound $\phi >1.551$ is due to Deuber, Erd\H{o}s, Gunderson, Kostochka and Meyer \cite{deubner1997}.

As variants on sunflowers, Frankl, Pach and Pálvölgyi \cite{frankl2023odd} introduced the following notion of odd-sunflowers and even-sunflowers.

A non-empty family of non-empty sets is an \emph{even-degree sunflower} or, simply, an \emph{even-sunflower}, if every element of the underlying set is contained in an even number of sets (or in none). Analogously, a family of at least two non-empty sets forms an \emph{odd-degree sunflower} or, simply, an \emph{odd-sunflower}, if every element of the underlying set is contained in an odd number of sets, or in none \cite{frankl2023odd}.

It should be noted that neither an even-sunflower nor an odd-sunflower need be a sunflower. However, any $3$-sunflower is an odd-sunflower. Thus every family that does not contain an odd-sunflower is guaranteed to be sunflower-free.

Let $f_{\mathrm{even}}(n)$ be the maximum size of a family of sets in $[n]$ that contains no even-sunflower. Analogously, let $f_{\mathrm{odd}}(n)$ be the maximum size of a family of sets in $[n]$ that contains no odd-sunflower. These two quantities exhibit a diametrically opposite behaviour. On one hand, the authors of \cite{frankl2023odd} observe that, as a variant of the ``odd-town'' problem (see, for example, a textbook by Babai and Frankl \cite{babaifrankl} for a reference), the following holds.

\begin{theorem}\label{thm:even}
$f_{\mathrm{even}}(n)=n$.
\end{theorem}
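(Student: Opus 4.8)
The plan is to prove the two matching bounds $f_{\mathrm{even}}(n)\ge n$ and $f_{\mathrm{even}}(n)\le n$ separately: the lower bound by an explicit construction, and the upper bound by a parity argument over $\mathbb{F}_2$ in the style of the classical ``oddtown'' theorem. Throughout, $\fly$ is understood to consist of non-empty sets, as is implicit in the definition of an even-sunflower.

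For the lower bound I would take $\fly=\{\{1\},\dots,\{n\}\}$, the family of all $n$ singletons. Any non-empty subfamily $\mathcal{G}\subseteq\fly$ consists of distinct singletons, so every element of the underlying set $\bigcup_{A\in\mathcal{G}}A$ lies in exactly one member of $\mathcal{G}$; since $1$ is odd, $\mathcal{G}$ is not an even-sunflower. Hence $\fly$ contains no even-sunflower, and $f_{\mathrm{even}}(n)\ge n$.

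For the upper bound, let $\fly$ be any family of non-empty subsets of $[n]$ with no even-sunflower, and to each $A\in\fly$ associate its characteristic vector $\mathbf{1}_A\in\mathbb{F}_2^{\,n}$. The crucial observation is that a non-trivial $\mathbb{F}_2$-linear dependence among these vectors amounts to a non-empty subfamily $\mathcal{G}\subseteq\fly$ with $\sum_{A\in\mathcal{G}}\mathbf{1}_A=\mathbf{0}$, and this equation says exactly that each element of $[n]$ lies in an even number of the (non-empty) sets of $\mathcal{G}$ — that is, that $\mathcal{G}$ is an even-sunflower. Since no such $\mathcal{G}$ exists, the vectors $\{\mathbf{1}_A:A\in\fly\}$ are linearly independent in the $n$-dimensional space $\mathbb{F}_2^{\,n}$, so $|\fly|\le n$. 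Combining the two bounds yields $f_{\mathrm{even}}(n)=n$.

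I do not expect a genuine obstacle here: the statement is essentially the ``oddtown'' bound and the proof is only a few lines. The one thing demanding a little care is the bookkeeping around non-emptiness — the two clauses in the definition of an even-sunflower (that the family is non-empty, and that its members are non-empty) must be matched with, respectively, the non-triviality of the linear combination and the fact that the characteristic vector of a non-empty set is non-zero (so that an $\mathbb{F}_2$-dependence really does produce an even-sunflower rather than a degenerate one).
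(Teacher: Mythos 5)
Your proof is correct, and it is exactly the argument the paper has in mind: the result is stated there without proof as ``a variant of the odd-town problem,'' i.e.\ precisely the $\mathbb{F}_2$-linear-independence bound plus the singleton construction that you give. Your explicit care with the two non-emptiness clauses is the right bookkeeping and closes the only possible loophole.
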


On the other hand, Frankl, Pach and P{\'a}lv{\"o}lgyi \cite{frankl2023odd} give the following exponential lower bound on $f_{\mathrm{odd}}$.

\begin{theorem}
$f_{\mathrm{odd}}(n)>1.502^n$.
\end{theorem}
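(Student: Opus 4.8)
The plan is to prove the essentially equivalent statement that $\phi_{\mathrm{odd}}:=\lim_{n\to\infty}f_{\mathrm{odd}}(n)^{1/n}>1.502$, by amplifying a small dense ``building block'' through a product (tensoring) construction. Call a family \emph{good} if it contains no odd-sunflower. The heart of the argument is the following product lemma: if $\mathcal{G}\subseteq\pot{[m]}$ is a good \emph{antichain} of non-empty sets, then, placing $k$ disjoint copies of $\mathcal{G}$ on consecutive blocks of $m$ ground-set elements, the family
\[
\mathcal{G}^{\otimes k}:=\bigl\{\,A_1\sqcup\cdots\sqcup A_k\ :\ A_1,\dots,A_k\in\mathcal{G}\,\bigr\}\subseteq\pot{[mk]}
\]
is again a good antichain of non-empty sets, and $|\mathcal{G}^{\otimes k}|=|\mathcal{G}|^{k}$ because distinct tuples give distinct unions.

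To prove the lemma I would argue by contradiction: suppose $S^{(1)},\dots,S^{(r)}\in\mathcal{G}^{\otimes k}$ are distinct, $r\ge2$, and form an odd-sunflower, and write $S^{(i)}=A^{(i)}_1\sqcup\cdots\sqcup A^{(i)}_k$. Fix a block $j$; for a point $x$ in block $j$, its multiplicity in the sub-collection is $\mu_j(x):=\#\{i:x\in A^{(i)}_j\}$, so the odd-sunflower hypothesis forces $\mu_j(x)$ to be odd or $0$ for every such $x$. Let $\mathcal{S}_j=\{A\in\mathcal{G}:A\text{ occurs an odd number of times among }A^{(1)}_j,\dots,A^{(r)}_j\}$. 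Reducing mod $2$, a point $x$ has $\mu_j(x)$ odd exactly when it lies in an odd number of members of $\mathcal{S}_j$, while $\mu_j(x)=0$ forces $x$ to lie in none; hence $\mathcal{S}_j$ is a family of non-empty sets in which every element of the underlying set has odd degree, i.e. an odd-sunflower unless $|\mathcal{S}_j|\le1$. Since $\mathcal{G}$ is good, $|\mathcal{S}_j|\le1$. If $\mathcal{S}_j=\varnothing$ then every $A^{(i)}_j$ occurs with positive even multiplicity, so any point of any such set has positive even $\mu_j$, contradicting the odd-sunflower property; hence $\mathcal{S}_j=\{A^*_j\}$ for a unique $A^*_j$. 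Any $x\notin A^*_j$ then has $\mu_j(x)$ even, so (as it cannot be positive) $\mu_j(x)=0$, which means every occurring $A^{(i)}_j$ lies inside $A^*_j$; the antichain property of $\mathcal{G}$ then forces $A^{(i)}_j=A^*_j$ for all $i$. As $j$ was arbitrary, all the $S^{(i)}$ coincide, contradicting $r\ge2$.

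It then remains to exhibit a good antichain $\mathcal{G}\subseteq\pot{[m]}$ of non-empty sets with $|\mathcal{G}|^{1/m}>1.502$. The ``triangle'' $\{\{1,2\},\{1,3\},\{2,3\}\}$ is a good antichain but only gives rate $3^{1/3}\approx1.442$, and even a full middle layer $\binom{[m]}{k}$ need not be good (it can contain a four-set odd-sunflower), so a genuinely cleverer block is required; one can obtain it either by a computer search over a ground set of moderate size, or by adapting the sunflower-free construction of Deuber, Erd\H{o}s, Gunderson, Kostochka and Meyer~\cite{deubner1997}, strengthening ``avoids $3$-sunflowers'' to ``avoids all odd-sunflowers'' and checking this directly. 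Given such a $\mathcal{G}$, each $\mathcal{G}^{\otimes k}$, regarded as a family on $[n]$ for any $n\ge mk$, is good, so $f_{\mathrm{odd}}(n)\ge|\mathcal{G}|^{\lfloor n/m\rfloor}\ge|\mathcal{G}|^{-1}\bigl(|\mathcal{G}|^{1/m}\bigr)^{n}$; since $|\mathcal{G}|^{1/m}>1.502$ this yields $f_{\mathrm{odd}}(n)>1.502^{n}$ for all large $n$ (and, with a block of sufficient slack, for all $n$), hence $\phi_{\mathrm{odd}}>1.502$.

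The product lemma is robust but only magnifies whatever exponential rate the block already has, so I expect the main obstacle to be the construction of the block: pushing the base above $1.502$ means finding — and certifying — a sufficiently dense odd-sunflower-free antichain, which seems to require either a nontrivial explicit family or a search. The other point needing care is the multiplicity bookkeeping in the product lemma: unlike for ordinary sunflowers, odd-sunflowers interact with repeated sets in a sub-collection through the parity condition, and tracking this is precisely what singles out the antichain hypothesis as the right one.
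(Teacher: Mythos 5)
This theorem is not proved in the paper at all --- it is quoted from Frankl, Pach and P{\'a}lv{\"o}lgyi \cite{frankl2023odd} as background motivation --- so there is no in-paper argument to measure yours against; I can only assess the proposal on its own terms. Your product lemma is correct, and the parity bookkeeping is exactly right: passing to the subfamily $\mathcal{S}_j$ of block-components occurring with odd multiplicity, noting that any point of a member of $\mathcal{S}_j$ has positive and hence odd $\mu_j$, and then using the antichain hypothesis to collapse each block to a single set is sound. The antichain hypothesis is genuinely needed (e.g.\ $\mathcal{G}=\{\{1\},\{1,2\}\}$ is odd-sunflower-free but its square contains the odd-sunflower $\{1,3\},\{1,3,4\},\{1,2,3\}$), and you were right to single it out. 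This tensoring device is also the standard amplification used for such lower bounds, as in the sunflower-free construction of \cite{deubner1997}.

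The genuine gap is the one you flag yourself, and it is not a minor loose end: the entire quantitative content of the theorem is the existence of an odd-sunflower-free antichain $\mathcal{G}\subseteq \mathcal{P}([m])$ of non-empty sets with $|\mathcal{G}|^{1/m}>1.502$, and no such block is produced or certified. Without it your argument yields only the triangle rate $3^{1/3}\approx 1.442$, and the constant $1.502$ is never derived from anything. The proposed fallback of adapting \cite{deubner1997} is not routine: since every $3$-sunflower is an odd-sunflower, odd-sunflower-freeness is a strictly stronger condition than $3$-sunflower-freeness, and odd-sunflowers on four or more sets impose parity constraints that the DEGKM family was never designed to satisfy, so one cannot reuse their block or their counting without substantial new work. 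As it stands, you have a correct reduction of the theorem to the construction of a suitable base family, but that reduction is the easy half; the theorem itself remains unproved. (A smaller point: even granting the block, $|\mathcal{G}|^{\lfloor n/m\rfloor}$ exceeds $1.502^n$ only once $n$ absorbs the multiplicative loss $|\mathcal{G}|^{-1}$, so small $n$ would need a separate check --- but this is cosmetic next to the missing construction.)
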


However, there is no better known upper bound than the one on sunflower-free families by Naslund and Sawin \cite{naslund2017upper}, i.e., $\lim_{n \to \infty} f_{\mathrm{odd}}(n)^{1/n} \leq \phi < 1.890$.

Our two questions came from pursuit of a better upper bound for $f_{\mathrm{odd}}(n)$ and two observations.
First, note that every family $\fly$ that does not contain an odd-sunflower is a \saw family. Indeed, any set $A$ together with an even sunflower $\mathcal{E}\subseteq \pot{A}\setminus \{ A\}$ form an odd-sunflower, and thus by \Cref{thm:even}, we need $\muf(A)\leq |A|+1$ for all $A\in\fly$. Furthermore, as two disjoint sets are considered an odd-sunflower, every family that does not contain an odd-sunflower is intersecting.

\section{The maximum size of a \sawt{t} family}

The first step of our proof of \Cref{th:nintersecting} is \Cref{ob:weight}, for which the following notation will be helpful. For a family $\fly$ on $[n]$, we let $\fly_i=\{A \in \fly: |A|=i\}$. Additionally, we define the \textit{weight} of $A\in \fly$ by $w(A)=\binom{n}{|A|}$, and the weight of a family $\fly$ by $w(\fly)=\sum_{A \in \fly} w(A)$.

\begin{Obs}\label{ob:weight}
Let $\fly$ be (any) family on $[n]$ and let $\mathcal{C}$ be a random maximal chain on $[n]$ chosen uniformly at random. Then $\mathbb{E}_{\mathcal{C}}[w(\fly \cap \mathcal{C})]=|\fly|$.
\end{Obs}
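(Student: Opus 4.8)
The plan is to compute the expectation by linearity and a symmetry/double-counting argument. For a fixed set $A \in \fly$ of size $i$, the probability that a uniformly random maximal chain $\mathcal{C}$ passes through $A$ is the number of maximal chains through $A$ divided by the total number of maximal chains. A maximal chain on $[n]$ has length $n+1$ (from $\emptyset$ to $[n]$), and there are $n!$ of them; the number passing through a fixed $A$ with $|A|=i$ is $i!\,(n-i)!$, since we must build $A$ up from $\emptyset$ (in $i!$ ways) and then extend $[n]$ from $A$ (in $(n-i)!$ ways). Hence $\Pr[A \in \mathcal{C}] = i!\,(n-i)!/n! = 1/\binom{n}{i} = 1/w(A)$.

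First I would write $w(\fly \cap \mathcal{C}) = \sum_{A \in \fly} w(A)\,\mathbf{1}[A \in \mathcal{C}]$, so that by linearity of expectation
\[
\mathbb{E}_{\mathcal{C}}[w(\fly \cap \mathcal{C})] = \sum_{A \in \fly} w(A)\,\Pr[A \in \mathcal{C}] = \sum_{A \in \fly} w(A)\cdot \frac{1}{w(A)} = |\fly|.
\]
The only thing that needs justification is the claim $\Pr[A \in \mathcal{C}] = 1/\binom{n}{|A|}$, which follows from the counting in the previous paragraph; alternatively one can see it directly by noting that a uniformly random maximal chain, restricted to level $|A|$, gives a uniformly random $|A|$-subset of $[n]$ (each of the $\binom{n}{|A|}$ subsets of that size is equally likely to be the one chosen), so the chain hits $A$ with probability exactly $1/\binom{n}{|A|}$.

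There is no real obstacle here; the statement is essentially a restatement of the standard fact underlying the LYM/BLYM inequality, repackaged with the weight function $w(A)=\binom{n}{|A|}$ chosen precisely to cancel the hitting probability. The only point requiring minor care is bookkeeping about what "maximal chain" means (a maximal chain in the Boolean lattice $\pot{[n]}$ runs through one set of each size $0,1,\dots,n$) and confirming the count $i!\,(n-i)!$ of maximal chains through a fixed $i$-set, but this is routine. I would state the hitting-probability computation as a short displayed line and then conclude immediately by linearity of expectation.
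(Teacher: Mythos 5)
Your proof is correct and follows essentially the same approach as the paper: linearity of expectation combined with the hitting probability $\mathbb{P}[A\in\mathcal{C}]=1/\binom{n}{|A|}$, which cancels the weight $w(A)=\binom{n}{|A|}$. The paper merely groups the sum by level $i$ rather than summing over individual sets $A$, which is an immaterial difference.
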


\begin{proof}
We simply compute $\mathbb{E}_{\mathcal{C}}[w(\fly \cap \mathcal{C})]=\sum_{i=0}^n w([i]) \cdot \mathbb{P}_{\mathcal{C}}(\fly_i\cap\mathcal{C}\neq \emptyset)=
\sum_{i=0}^n w([i]) \cdot |\fly_i| / \binom{n}{i}=
\sum_{i=0}^n |\fly_i|=
|\fly|$.    
\end{proof}

We are now ready to prove \Cref{th:nintersecting}.

\begin{proof}[Proof of \Cref{th:nintersecting}]
First, note that the \sawt{t} family $\bigcup_{j=0}^{t} \binom{[n]}{\lfloor(n-t)/2\rfloor + j}$ has size equal to $\sum_{j=0}^{t} \binom{n}{\lfloor(n-t)/2\rfloor + j}$. It remains to show that no \sawt{t} family has larger size.

Let $\fly$ be a \sawt{t} family on $[n]$ and let $\mathcal{C}$ be a maximal chain on $[n]$ chosen uniformly at random. Define $M(\mathcal{C})$ to be the maximal set in $\mathcal{C} \cap \fly$ when the intersection is non-empty, and $M(\mathcal{C})=\star$ otherwise.

Applying \Cref{ob:weight} and using conditional expectation (with the convention that $\mathbb{E}[X\, \vert\, Y]=\mathbb{P}(U\, \vert\, Y)=0$ when $\mathbb{P}(Y)=0$), we obtain the following.

\begin{align}
|\fly|
&=\mathbb{E}_{\mathcal{C}}\Bigl[w(|\mathcal{C}\cap \fly|)\Bigr]\nonumber\\
&= \mathbb{E}_{\mathcal{C}}\Bigl[w(|\mathcal{C}\cap \fly|)\, \Big|\, M(\mathcal{C}) = {\star}\Bigr]\cdot\mathbb{P}_{\mathcal{C}}\Bigl(M(\mathcal{C}) = {\star}\Bigr)+ \sum_{A\in \fly}\mathbb{E}_{\mathcal{C}}\Bigl[w(|\mathcal{C}\cap \fly|)\, \Big|\, M(\mathcal{C}) = A\Bigr]\cdot\mathbb{P}_{\mathcal{C}}\Bigl(M(\mathcal{C}) = A\Bigr)\nonumber \\
&=\sum_{A\in \fly}\mathbb{E}_{\mathcal{C}}\Bigl[w(|\mathcal{C}\cap \fly|)\, \Big|\, M(\mathcal{C}) = A\Bigr]\cdot\mathbb{P}_{\mathcal{C}}\Bigl(M(\mathcal{C}) = A\Bigr)\nonumber\\
&\leq \max_{A\in\fly}\mathbb{E}_{\mathcal{C}}\Bigl[w(|\mathcal{C}\cap \fly|)\, \Big|\, M(\mathcal{C}) = A\Bigr] \label{eq:max}\\
&=
\max_{A\in\fly} \sum_{B \in \fly \cap \pot{A}}w(B)\cdot\mathbb{P}_{\mathcal{C}}\Bigl(B\in \mathcal{C}\, \Big|\, M(\mathcal{C}) = A\Bigr)\nonumber\\
&\leq
\max_{A\in\fly} \sum_{B \in \fly \cap \pot{A}} \left.\binom{n}{|B|}\right/\binom{|A|}{|B|}\nonumber
\end{align}

Note that $\binom{n}{|B|}/\binom{|A|}{|B|}$ equals $\binom{n}{|A|}/\binom{n-|B|}{n-|A|}$, and thus is non-decreasing as $|B|$ grows from $0$ to $|A|$ (it is, in fact, increasing whenever $|A| \neq n$). If we combine this with $\muf(A)\leq \sum_{j=0}^t \binom{|A|}{j}$, which holds as $\fly$ is \sawt{t}, we get

\begin{align}
|\fly|
&\leq \max_{A\in\fly} \sum_{B \in \fly \cap \pot{A}} \left.\binom{n}{|B|}\right/\binom{|A|}{|B|} \nonumber\\
&\leq \max_{A\in\fly} \sum_{j=\max(0, |A|-t)}^{|A|} \binom{n}{j}\label{eq:layers}\\
&\leq \sum_{j=0}^{t} \binom{n}{\lfloor (n-t)/2\rfloor + j} \label{eq:sizeofA},
\end{align}
where (\ref{eq:sizeofA}) holds because the sum is maximised for $|A| \in \{\lfloor (n+t)/2 \rfloor, \lceil (n+t)/2 \rceil\}$.

The proof is concluded.
\end{proof}

We now turn our attention to characterizing \sawt{t} families that attain maximum size. Similarly to the classification of equality cases in Sperner's theorem \cite{Sperner1928}, the problem is easier for a particular parity of $n$, in our case when $n \equiv t \pmod{2}$.

To simplify the proof, we say that an element $A\in\fly$ is \textit{chain-maximal} if there is a maximal chain $C$ such that $A$ is the maximal element of $C\cap \fly$. In the notation of the proof of \Cref{th:nintersecting} the condition becomes $\mathbb{P}_{\mathcal{C}}(M(\mathcal{C}) = A)>0$. Clearly, all maximal elements of $\fly$ are chain-maximal.

\begin{proof}[Proof of \Cref{th:nintersecting-class}]

First, we note that we may omit the case $t=0$ which describes well-known equality cases in Sperner's theorem \cite{Sperner1928}. In the rest of the proof we assume $t>0$.

The statement is trivial if $t=n$ and straightforward to check for $t=n-1$ since $\pot{[n]} \setminus \{B\}$ is \sawt{t} for any $B \in \pot{[n]}$. This resolves \ref{item:second} and the case $t=n$ in \ref{item:first}. Suppose further that $t\leq n-2$.

Let $\fly$ be a \sawt{t} family of size $\sum_{j=0}^{t} \binom{n}{\lfloor(n-t)/2\rfloor + j}$. Then for $\fly$, (\ref{eq:max}), (\ref{eq:layers}) and (\ref{eq:sizeofA}) are all equalities. Equality in (\ref{eq:max}) means that each of the chain-maximal elements $A \in \fly$ maximises $\mathbb{E}_{\mathcal{C}}[w(|\mathcal{C}\cap \fly|)\, \vert\, M(\mathcal{C}) = A]$. Then, equality in (\ref{eq:layers}) means that for every such element $A\neq [n]$ of $\fly$, the other elements in $\pot{A} \cap \fly$ are exactly the subsets of $A$ in the $t$ layers of the hypercube below $A$. 

Moreover, if $t \equiv n \pmod{2}$, equality in \eqref{eq:sizeofA} is attained if and only if $|A|=(n+t)/2$.
Hence, all maximal elements of $\fly$ have size $(n+t)/2<n$, and all other elements of $\fly$ lie in the $t$ layers of the hypercube beneath the maximal elements.
This proves \ref{item:first}.

Now, assume $t \not \equiv n \pmod{2}$.

By equality in (\ref{eq:sizeofA}), we see that all chain-maximal elements of $\fly$ have size in $\{(n+t-1)/2, (n+t+1)/2\}$.
Therefore, $[n]$ is not an element of $\fly$. Equality in (\ref{eq:layers}) thus implies that for every chain-maximal $A\in\fly$, we have $\pot{A} \cap \fly$ is precisely $\bigcup_{j=|A|-t}^{|A|} (\pot{A} \cap \binom{[n]}{j})$.

If $\fly$ contains no set of size $(n+t+1)/2$, then we immediately get $\fly=\bigcup_{j=0}^{t} \binom{[n]}{(n-t-1)/2 +j}$.

Suppose, finally, that $\fly$ contains a set $A$ of size $(n+t+1)/2$.
Then $A$ is maximal in $\fly$, and thus $\pot{A} \cap \fly=\bigcup_{j=|A|-t}^{|A|} (\pot{A} \cap \binom{[n]}{j})$.  
But then for any $B \subset A$ of size $|A|-1$, it is not true that $\pot{B} \cap \fly=\bigcup_{j=|B|-t}^{|B|} (\pot{B} \cap \binom{[n]}{j})$. Therefore, no such $B$ is chain-maximal. Therefore, all supersets of $B$ of size $(n+t+1)/2$ are in $\fly$. Repeating this argument, one eventually obtains that $\binom{[n]}{(n+t+1)/2)} \subseteq \fly$.
Hence, in this case, $\fly=\bigcup_{j=0}^{t} \binom{[n]}{(n-t+1)/2 +j}$.
This finishes the proof of \ref{item:third} and concludes the proof.
\end{proof}

\section{The maximum size of an intersecting \saw family}

To prove \Cref{th:intersecting-odd} and \Cref{th:intersecting-class} we use a different strategy, analogous to the probabilistic proof of Sperner's theorem using the LYM inequality, see, for example, \cite[pp.237-238]{ProbMethod}. The main ingredient of the proofs will be that a maximal chain on average intersects a \saw family in at most two elements.

Note that in \Cref{le:chain},  there is no condition on intersections. The lemma holds for general \saw families (in particular, also for families that contain no odd-sunflower).

\begin{lemma} \label{le:chain}
Let $\mathcal{C}$ be a maximal chain on $[n]$ chosen uniformly at random. If $\fly$ is a \saw family on $[n]$ which does not contain the empty set, then $\mathbb{E}_{\mathcal{C}}[|\mathcal{C}\cap \fly|]\leq 2$.

Moreover, if equality occurs, then for any chain-maximal $A \in \fly$ we have $\mu_F(A)=|A|+1$ and each set in $\pot{A}\cap \fly$ different from $A$ has size $1$ or $|A|-1$.
\end{lemma}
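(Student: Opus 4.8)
The plan is to bound $\mathbb{E}_{\mathcal{C}}[|\mathcal{C}\cap\fly|]$ by summing, over the possible values $A$ of the chain-maximal set $M(\mathcal{C})$, the contribution $\mathbb{E}_{\mathcal{C}}[|\mathcal{C}\cap\fly| \mid M(\mathcal{C})=A]$ weighted by $\mathbb{P}_{\mathcal{C}}(M(\mathcal{C})=A)$, exactly as in the proof of \Cref{th:nintersecting} but now without the weights $w(B)$. Since the probabilities $\mathbb{P}_{\mathcal{C}}(M(\mathcal{C})=A)$ sum to at most $1$ (they sum to $1-\mathbb{P}(M(\mathcal{C})=\star)$, and $\mathbb{P}(M(\mathcal{C})=\star)\geq 0$), it suffices to show that for every chain-maximal $A\in\fly$,
\[
\mathbb{E}_{\mathcal{C}}\bigl[|\mathcal{C}\cap\fly| \,\big|\, M(\mathcal{C})=A\bigr]\leq 2.
\]
Conditioned on $M(\mathcal{C})=A$, every element of $\mathcal{C}\cap\fly$ is a subset of $A$, so this conditional expectation equals $1+\sum_{B\in\fly\cap\pot{A},\,B\neq A}\mathbb{P}_{\mathcal{C}}(B\in\mathcal{C}\mid M(\mathcal{C})=A)$, and I will bound $\mathbb{P}_{\mathcal{C}}(B\in\mathcal{C}\mid M(\mathcal{C})=A)$ from above by $\mathbb{P}_{\mathcal{C}}(B\in\mathcal{C}\mid A\in\mathcal{C})=1/\binom{|A|}{|B|}$ — the point being that conditioning further on $A$ being the \emph{maximal} $\fly$-element in the chain only restricts the part of the chain above $A$ and is independent of the part below $A$, hence does not increase the chance of passing through a fixed $B\subseteq A$. (This monotonicity step is the one used already to get \eqref{eq:layers}; I would spell it out here.)

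The heart of the argument is then the purely combinatorial inequality: for a \saw set $A$ with $1\le|A|=a$, writing $\fly\cap\pot{A}=\{A\}\cup\mathcal{B}$ with $|\mathcal{B}|=\muf(A)-1\le a$ and $\mathcal{B}\subseteq\pot{A}\setminus\{A,\emptyset\}$ (the empty set is excluded by hypothesis), we need
\[
\sum_{B\in\mathcal{B}}\frac{1}{\binom{a}{|B|}}\le 1.
\]
The quantity $1/\binom{a}{j}$ is largest, among $1\le j\le a-1$, at the two extremes $j=1$ and $j=a-1$, where it equals $1/a$. Since there are at most $a$ sets in $\mathcal{B}$, each contributing at most $1/a$, the sum is at most $1$, giving the desired bound of $2$. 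This is the clean ``main ingredient'' the authors advertise.

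For the equality characterization, trace back when all inequalities are tight. First, $\mathbb{E}_{\mathcal{C}}[|\mathcal{C}\cap\fly|]=2$ forces $\mathbb{P}(M(\mathcal{C})=\star)=0$ and the conditional expectation to equal $2$ for \emph{every} chain-maximal $A$. For such an $A$ with $a=|A|$, tightness in the combinatorial step forces $|\mathcal{B}|=a$, i.e. $\muf(A)=a+1$, and every $B\in\mathcal{B}$ must satisfy $1/\binom{a}{|B|}=1/a$, i.e. $|B|\in\{1,a-1\}$ (when $a\ge 2$; the case $a=1$ is degenerate and has $\mathcal{B}=\emptyset$, $\muf(A)=1\ne 2$, so such $A$ cannot be chain-maximal under equality); and the monotonicity step must be tight, meaning $\mathbb{P}(B\in\mathcal{C}\mid M(\mathcal{C})=A)=1/\binom{a}{|B|}$, which is automatic once we only keep $B$ of size $1$ or $a-1$ and $A$ is genuinely maximal. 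This yields exactly the stated conclusion. The main obstacle I anticipate is being careful with the conditional-probability monotonicity claim — justifying cleanly that conditioning on $M(\mathcal{C})=A$ rather than merely on $A\in\mathcal{C}$ does not increase $\mathbb{P}(B\in\mathcal{C})$ for $B\subseteq A$ — since everything else is a short extremal-of-binomial-coefficients computation; I would handle it by the product structure of a uniform random maximal chain (the induced chains below and above $A$ are independent given $A\in\mathcal{C}$, and $\{M(\mathcal{C})=A\}$ depends only on the part above $A$ together with $A\in\mathcal{C}$), with the small caveat that one also needs $\mathbb{P}(M(\mathcal{C})=A)>0$, which is precisely chain-maximality.
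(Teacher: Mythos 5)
Your proposal is correct and follows essentially the same route as the paper: decompose $\mathbb{E}_{\mathcal{C}}[|\mathcal{C}\cap\fly|]$ according to the chain-maximal element $M(\mathcal{C})=A$, bound each conditional expectation by $1+\sum_{B}1/\binom{|A|}{|B|}$, and then use that $1/\binom{|A|}{|B|}\leq 1/|A|$ for $1\leq|B|\leq|A|-1$ together with $\muf(A)\leq|A|+1$, tracing equality back through the same three inequalities. Your extra care in justifying $\mathbb{P}_{\mathcal{C}}(B\in\mathcal{C}\mid M(\mathcal{C})=A)=1/\binom{|A|}{|B|}$ via the independence of the chain below and above $A$ is a welcome elaboration of a step the paper leaves implicit, but it does not change the argument.
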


\begin{proof}
Let $E=\mathbb{E}_{\mathcal{C}}[|\mathcal{C}\cap \fly|]$. Define $M(\mathcal{C})$ to be the maximal set in $\mathcal{C}\cap \fly$ when the intersection is non-empty and $M(\mathcal{C}) = \star$ otherwise. As in the proof of \Cref{th:nintersecting} (again, with the convention that $\mathbb{E}[X\, \vert\, Y]=\mathbb{P}(U\, \vert\, Y)=0$ when $\mathbb{P}(Y)=0$), we can write

\begin{align}
E &= \mathbb{E}_{\mathcal{C}}\Bigl[|\mathcal{C}\cap \fly|\, \Big|\, M(\mathcal{C})= {\star}\Bigr] \cdot\mathbb{P}_{\mathcal{C}}\Bigl(M(\mathcal{C}) = {\star}\Bigr) + \sum_{A\in \fly}\mathbb{E}_{\mathcal{C}}\Bigl[|\mathcal{C}\cap \fly|\, \Big|\, M(\mathcal{C}) = A\Bigr] \cdot\mathbb{P}_{\mathcal{C}}\Bigl(M(\mathcal{C}) = A\Bigr)\nonumber \\
&=\sum_{A\in \fly}\mathbb{E}_{\mathcal{C}}\Bigl[|\mathcal{C}\cap \fly|\, \Big|\, M(\mathcal{C}) = A\Bigr] \cdot\mathbb{P}_{\mathcal{C}}\Bigl(M(\mathcal{C}) = A\Bigr)\nonumber \\
&\leq \max_{A\in\fly}\mathbb{E}_{\mathcal{C}}\Bigl[|\mathcal{C}\cap \fly|\, \Big|\, M(\mathcal{C}) = A\Bigr]\label{al:globalbound} \\
&\leq 1+\max_{A\in\fly} \sum_{\substack{B\in\fly\cap\pot{A} \\ B\neq A }}\mathbb{P}_{\mathcal{C}}\Bigl(B\in\mathcal{C}\, \Big|\, M(\mathcal{C}) = A\Bigr)\nonumber \\
&\leq 1 + \max_{A\in\fly}\sum_{\substack{B\in\fly\cap\pot{A} \\ B\neq A }} \frac{1}{\binom{|A|}{|B|}}. \nonumber
\end{align}

Since $\fly$ does not contain the empty set and $\muf(A)\leq |A|+1$ for any $A$ in $\fly$, we compute

\begin{align}
E &\leq 1 + \max_{A\in\fly}\sum_{\substack{B\in\fly\cap\pot{A} \\ B\neq A }} \frac{1}{\binom{|A|}{|B|}} \nonumber \\
&\leq 1+\max_{A\in\fly}\sum_{\substack{B\in\fly\cap\pot{A} \\ B\neq A }} \frac{1}{|A|} \label{al:binom} \\
&\leq 2, \label{al:localbound} 
\end{align}
as required.

To deduce the `moreover' part, suppose that $\mathbb{E}_{\mathcal{C}}[|\mathcal{C}\cap \fly|]= 2$ and $A\in \fly$ is chain-maximal. In other words, $\mathbb{P}(M(\mathcal{C}) = A)>0$, and thus to get equality in (\ref{al:globalbound}) we need $\mathbb{E}_{\mathcal{C}}[|\mathcal{C}\cap \fly|\, \vert\, M(\mathcal{C}) = A]=2$. We now see that $\mu_F(A)=|A|+1$ and each set in $\pot{A}\cap \fly$ different from $A$ has size $1$ or $|A|-1$ as equality must occur in (\ref{al:localbound}), respectively, (\ref{al:binom}). 
\end{proof}

To find the maximum size of an intersecting \saw family on $[2k-1]$, we combine \Cref{le:chain} and the fact that an intersecting family on $[2k-1]$ contains at most $\binom{2k-1}{k}$ elements from the middle two layers.

\begin{proof}[Proof of \Cref{th:intersecting-odd}]
The claimed maximal cardinality is attained by the intersecting \saw family $\binom{[2k-1]}{k}\cup\binom{[2k-1]}{k+1}$.

To prove the upper bound, let $\fly$ be an intersecting \saw family. Since $\fly$ is intersecting, it does not contain the empty set. By \Cref{le:chain} for a random maximal chain $\mathcal{C}$ we compute

\begin{equation} \label{eq:level bounds}
2\geq \mathbb{E}_{\mathcal{C}}[|\mathcal{C}\cap \fly|] = \sum_{i=0}^{n} \frac{|\fly_i|}{\binom{n}{i}}. 
\end{equation}

As $\fly$ is intersecting, we need $|\fly_{k-1}| + |\fly_{k}|\leq \binom{2k-1}{k}$. The result follows as $\binom{2k-1}{k-1}$, $\binom{2k-1}{k}$ and $\binom{2k-1}{k+1}$ are the three greatest binomial coefficients of the form $\binom{2k-1}{i}$.
\end{proof}

The extremal families are easy to deduce from the proof.

\begin{proof}[Proof of \Cref{th:intersecting-class}]
The statement is clear if $n\leq 3$, so we suppose that $n\geq 5$ onwards. If $\fly$ is an intersecting \saw family of maximal size, then
equality in (\ref{eq:level bounds}) holds. Morevoer, the final paragraph of the proof of \Cref{th:intersecting-odd} implies that $\fly$ consists of one half of the elements of $\binom{[2k-1]}{k+1} \cup \binom{[2k-1]}{k-2}$ and one half of the elements of $\binom{[2k-1]}{k} \cup \binom{[2k-1]}{k-1}$.

As it is intersecting, it contains at least one element of $\binom{[2k-1]}{k+1}$, which for $n>5$ (that is, $k>3$) by the `moreover' part of \Cref{le:chain} contains all its subsets of size $k$. Using the same argument as in the proof of \Cref{th:nintersecting-class} \ref{item:third} we deduce that in fact $\fly$ contains all sets of size $k+1$ and thus it must be $\binom{[2k-1]}{k}\cup\binom{[2k-1]}{k+1}$.

This remains true for $n=5$ unless there is a singleton set, say $\{ 5\}$ in $\fly$. But then all sets of $\fly$ have $5$ as an element and thus $\{ A\setminus \{ 5\} : A\in\fly \}\setminus \{\emptyset\}$ is a \saw family on $[4]$ of size $14$, which contradicts \Cref{th:nintersecting}. 
\end{proof}

For even $n=2k$, the same proof technique one gives an upper bound of $\frac{1}{2}\binom{2k}{k}+\binom{2k}{k+1}+\frac{1}{2}\binom{2k}{k+2}$ for the maximum size of an intersecting \saw family. However, using ad-hoc arguments it can be shown that this bound is not attained for any sufficiently large $n$.

Instead, as stated in \Cref{conj:intersecting-even}, we believe that this maximum size is $\frac{1}{2}\binom{2k}{k}+\binom{2k}{k+1}+\binom{2k-1}{k+2}$, with a lightning being one of the extremal constructions. Note that a lightning would not be a unique extremal construction, as it could be possible to exchange some of the $(k+1)$-sized sets not containing $1$ with their complements.

A possible first step towards \Cref{conj:intersecting-even} is proving it with an extra assumption that the \saw family lies in the union $\binom{[2k]}{k}\cup\binom{[2k]}{k+1}\cup\binom{[2k]}{k+2}$.

\section*{Concluding remarks}
In this paper, we studied \sawt{t} families, with the hope of achieving a better understanding of families which contain no odd-sunflowers. We have given a robust answer to \Cref{qu:nintersecting}. However, the case of \Cref{qu:intersecting} is still mostly open --- of interest would be already the case of $n$ even and $t=1$, see \Cref{conj:intersecting-even}.

We finish on a property of \saw families (and therefore also of families which contain no odd-sunflowers) that could be of further interest, the ``Bubbling-up Lemma''.

\begin{lemma} \label{le:bubbling up}
Let $\fly$ be a \saw family and $C\subset B\subset A$ be sets such that $A$ and $C$ lie in $\fly$ but $B$ does not lie in $\fly$, and $|B|=|A|-1$. Then $\widetilde{\fly} = (\fly \cup \{B\}) \setminus \{C\}$ is a \saw family.
\end{lemma}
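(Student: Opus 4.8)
The plan is to verify the defining inequality $\mu_{\widetilde{\fly}}(X)\leq |X|+1$ for every $X\in\widetilde{\fly}$, where $\widetilde{\fly}=(\fly\cup\{B\})\setminus\{C\}$. First I would observe that $\mu_{\widetilde{\fly}}(X)$ and $\mu_{\fly}(X)$ can differ only through the two sets that were moved: $\mu_{\widetilde{\fly}}(X)=\mu_{\fly}(X)+[B\subseteq X]-[C\subseteq X]$, where $[\cdot]$ is an Iverson bracket (and of course $[B\subseteq X]$ counts $B$ as a, possibly improper, subset of $X$). So the only values of $X$ for which the inequality could fail in $\widetilde{\fly}$ but held in $\fly$ are those with $B\subseteq X$ and $C\not\subseteq X$. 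Since $C\subset B$, the condition $C\not\subseteq X$ forces $B\not\subseteq X$, a contradiction; hence whenever $B\subseteq X$ we also have $C\subseteq X$, the two brackets cancel, and $\mu_{\widetilde{\fly}}(X)=\mu_{\fly}(X)\leq |X|+1$. The only genuinely new constraint to check is therefore the one coming from the newly added set itself, namely $X=B$: we must show $\mu_{\widetilde{\fly}}(B)\leq |B|+1$.

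For $X=B$ we compute $\mu_{\widetilde{\fly}}(B)=\mu_{\fly}(B)+1-1=\mu_{\fly}(B)$, since $B\subseteq B$ and $C\subset B$. Now $B\notin\fly$, so $\mu_{\fly}(B)=|\pot{B}\cap\fly|$ counts only proper subsets of $B$ that lie in $\fly$; all of these are also proper subsets of $A$ (as $B\subset A$), and they do not include $A$ itself. Since $B\subsetneq A$ and $|B|=|A|-1$, the set $A\setminus B$ is a single element, so among the subsets of $A$ the set $B$ is the unique one of size $|A|-1$ not equal to $A$; in particular every proper subset of $B$ is a proper subset of $A$ distinct from $A$ and from $B$. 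Hence $\mu_{\fly}(B)\leq \mu_{\fly}(A)-1$: every set counted by $\mu_{\fly}(B)$ is counted by $\mu_{\fly}(A)$, and $A$ itself is counted by $\mu_{\fly}(A)$ but not by $\mu_{\fly}(B)$. Using that $\fly$ is \saw, $\mu_{\fly}(A)\leq |A|+1$, and therefore $\mu_{\widetilde{\fly}}(B)=\mu_{\fly}(B)\leq |A|+1-1=|A|=|B|+1$, as required.

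Putting these pieces together: for $X=B$ the inequality holds by the paragraph above; for every other $X\in\widetilde{\fly}$ we have $X\in\fly$, and either $B\not\subseteq X$ (so $\mu_{\widetilde{\fly}}(X)\leq\mu_{\fly}(X)\leq|X|+1$) or $B\subseteq X$ (so $C\subseteq X$ and $\mu_{\widetilde{\fly}}(X)=\mu_{\fly}(X)\leq|X|+1$). Thus $\widetilde{\fly}$ is \saw. I do not anticipate a serious obstacle here; the one point that needs care is the bookkeeping of which $X$ can be affected by the swap and making sure the case $X=B$ genuinely uses both the hypothesis $|B|=|A|-1$ (so that $B$ is the only ``missing level'' between the subsets of $B$ and $A$) and the temperateness of $\fly$ applied at $A$. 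A minor subtlety worth a sentence in the write-up is that $X=B$ is the only member of $\widetilde{\fly}$ not already in $\fly$, so no other ``new'' constraints arise.
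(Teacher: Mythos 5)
Your proof is correct and follows essentially the same route as the paper's: every $X\neq B$ is unaffected (or only helped) because $C\subset B$, and for the new set one bounds $\muft(B)\leq\muf(A)-1\leq|A|+1-1=|B|+1$ using temperateness at $A$. One parenthetical aside is false --- $B$ is not the \emph{unique} size-$(|A|-1)$ subset of $A$ distinct from $A$ --- but nothing you conclude depends on it, so the argument stands.
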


\begin{proof}
Let $D$ be a set in $\widetilde{\fly}$. If $D\neq B$, then the number of subsets of $D$ in $\widetilde{\fly}$ is at most the number of subsets of $D$ in $\fly$ since $C\subset B$. If $D=B$, then $\muft(B) \leq \muft(A) -1 \leq |A|+1-1 = |B|+1$. Thus $\widetilde{\fly}$ is a \saw family. 
\end{proof}

\section*{Acknowledgements}
The authors would like to thank Béla Bollobás and Mark Wildon for their valuable comments on the manuscript, including a suggestion to generalise \saw families to \sawt{t} families made by the first-named. The authors would also like to thank Lutz Warnke for pointing out a missing reference in an earlier version.

The first author would like to acknowledge support by the EPSRC (Engineering and Physical Sciences Research Council), reference EP/V52024X/1, and by the Department of Pure Mathematics and Mathematical Statistics of the University of Cambridge. The second author would like to acknowledge support from Royal Holloway, University of London.

\bibliographystyle{abbrvnat}  
\bibliography{bibliography}

\end{document}